\newcommand{\Spec}{\mathop{\mathrm{Spec}}\nolimits}
\newcommand{\hull}{\mathop{\mathrm{hull}}\nolimits}
\newcommand{\even}{{\mathrm{even}}}
\newcommand{\hgt}{\mathop{\mathrm{ht}}\nolimits}
\newcommand{\gr}{\mathop{\mathrm{gr}}\nolimits}
\newcommand{\gl}{\mathop{\mathfrak{gl}}\nolimits}
\newcommand{\SL}{\mathop{\mathit{SL}}\nolimits}
\newcommand{\Hom}{\mathop{\mathrm{Hom}}\nolimits}
\newcommand{\Z}{\mathbb Z}
\newcommand{\Pp}{\mathbb P}
\newcommand{\Ga}{{\mathbb G_a}}
\newcommand{\cO}{\mathcal O}
\newcommand{\qed}{\unskip\nobreak\hfill\hbox{ $\Box$}}
\newcommand{\mod}{\bmod}
\def\k{{\mathbf k}}
\newtheorem{Theorem}[subsection]{Theorem}
\newtheorem{Lemma}[subsection]{Lemma}
\newtheorem{Remark}[subsection]{Remark}
\newtheorem{Definition}[subsection]{Definition}
\newtheorem{Notation}[subsection]{Notation}
\newenvironment{proof}{\emph{Proof}}{\qed\\}
\begin{document}
\title{Cohomological finite generation for the group scheme $SL_2$.}
\author{Wilberd van der Kallen}
\date{\emph{Dedicated to the memory of T. A. Springer}}
\maketitle
\sloppy
\begin{abstract}
Let $G$ be the group scheme $\SL_2$ defined over a noetherian ring $\k$.
If $G$ acts on a finitely generated commutative $\k$-algebra $A$, then $H^*(G,A)$
is a finitely generated $\k$-algebra.
\end{abstract}

\section{Introduction}
Let $\k$ be a noetherian ring. 
Consider  a flat linear algebraic  group scheme
$G$ defined over  $\k$.
Recall that $G$ has the cohomological finite generation property (CFG)
if the following holds:
Let $A$ be a finitely generated commutative $\k$-algebra on which $G$ acts
 rationally by $\k$-algebra automorphisms. (So $G$ acts from the right
 on $\Spec(A)$.)
Then the cohomology ring $H^*(G,A)$ is finitely generated
as a $\k$-algebra. Here, as in
\cite[I.4]{J}, we use the cohomology introduced by
Hochschild, also known as `rational cohomology'.

This note is part of the project of studying (CFG) for reductive $G$.
Recall that the breakthrough of Touz\'e \cite{Touze} settled the case when $\k$ is a field \cite{TvdK}.
And \cite[Theorem 10.1]{goodfamily} extended this to the case that $\k$ contains a field. 
In this paper we show that in the case $G=\SL_2$ one can dispense with the condition that $\k$
contains a field. According to the last item of \cite[Theorem 10.5]{goodfamily} it suffices to show that 
$H^*(G,A/pA)$ is a noetherian module over $H^*(G,A)$ whenever $p$ is a prime number.
We fix $p$. To prove the noetherian property we employ universal cohomology classes as in earlier work. 
More specifically, we
 lift the cohomology classes $c_r[a]^{(j)}$ of \cite[4.6]{vdK coh} to classes
in cohomology of $\SL_2$ over the integers 
 with flat coefficient module $\Gamma^{m}\Gamma^{p^{r+j}}(\gl_2)$. 
We get the lifts with explicit formulas that do not seem to generalize to $\SL_n$ with $n>2$. 
Once we have the lifts of the cohomology classes we can lift enough of the mod $p$ constructions to conclude
that $H^*(G,A)$ hits much of $H^*(G,A/pA)$. As $H^*(G,A/pA)$ itself is a finitely generated $\k$-algebra this 
will then imply that $H^*(G,A/pA)$ is a noetherian module over $H^*(G,A)$.

For simplicity of reference we use \cite{goodfamily}. As we are working with $\SL_2$ that amounts to serious overkill.
For instance, the work of Touz\'e is not needed for $\SL_2$. 
Further the `functorial resolution of the ideal of the
diagonal in a product of Grassmannians'  now just means that the
ideal sheaf of the diagonal divisor in a product of two projective lines is the familiar line bundle
$\cO(-1)\boxtimes \cO(-1)$. And
Kempf vanishing for $\SL_2$ is immediate from the computation of the cohomology of line bundles on $\Pp^1$.

\section{Rank one}
We take $G=\SL_2$ as group scheme over the noetherian ring $\k$. 
Initially $\k$ is just $\Z$.
Let $T$ be the diagonal torus and $B$ the Borel subgroup of
lower triangular matrices. Its root $\alpha$ is the negative root.

\subsection{Cocycles for the additive group.}
We have fixed a prime $p$.
Define $\Phi(X,Y)\in \Z[X,Y]$ by $$(X+Y)^p=X^p+Y^p+p\Phi(X,Y).$$

By induction one gets for $r\geq1$
$$(X+Y)^{p^r}\equiv X^{p^r}+Y^{p^r}+p\Phi(X^{p^{r-1}},Y^{p^{r-1}})\bmod p^2.$$
Put $$c_r^\Z(X,Y)=\frac{(X+Y)^{p^r}-X^{p^r}-
Y^{p^r}}p\in \Z[X,Y].$$
We think of $c_r^\Z$ as a 2-cochain in the Hochschild complex $C^\bullet(\Ga,\Z)$ as treated in \cite[I 4.14, I 4.20]{J}.
Then $c_r^\Z$ is a $2$-cocycle  because $pc_r^\Z$ is a coboundary.
One has $$c_r^\Z(X,Y)\equiv\Phi(X^{p^{r-1}},Y^{p^{r-1}})\bmod p.$$
Taking cup products one finds a $2m$-cocycle $c_r^\Z(X,Y)^{\cup m}$ representing a class in $H^{2m}(\Ga,\Z)$.
The cocycle $c_r^\Z$ serves as lift of the $(r-1)$-st Frobenius twist of the Witt vector class that was our starting point in 
\cite[\S 4]{vdK coh}. We can now follow \cite[\S 4]{vdK coh}, lifting all relevant mod $p$ constructions to the integers.
That will do the trick.

\subsection{Universal classes}
Our next task is to construct a universal 
class $c_r[m]^{(j)}$  in $H^{2mp^{r-1}}(G,\Gamma^{m}\Gamma^{p^{r+j}}(\gl_2))$.

Let $r\geq1$, $j\geq0$, $m\geq1$. 
Let $\alpha$ be the negative root, and let $x_\alpha :\Ga\to \SL_2$ be
its  root homomorphism, with image $U_\alpha$. For a $\Z$-module $V$ its $m$-th module of divided powers is written
$\Gamma^m V$ and its dual $\Hom_\Z(V,\Z)$ is written $V^\#$.

Consider the representation $\Gamma^{mp^{r+j}}(\gl_2)$ of $G$ with its restriction 
$x_\alpha^*\Gamma^{mp^{r+j}}(\gl_2)$ to $\Ga$. Its lowest weight is $mp^{r+j}\alpha$. 
Say $e_\alpha$ is the elementary matrix $\pmatrix{0&0\cr 1&0}$ that spans the $\alpha$ weight space of $\gl_2$, and $e_\alpha^{[mp^{r+j}]}$
denotes its divided power in $\Gamma^{mp^{r+j}}(\gl_2)$. 
Then $c_{j+1}^\Z(X,Y)^{\cup mp^{r-1}}e_\alpha^{[mp^{r+j}]}$ represents a class in
$H^{2mp^{r-1}}(\Ga,x_\alpha^*\Gamma^{mp^{r+j}}(\gl_2))$ and the corresponding element of  
$H^{2mp^{r-1}}(U_\alpha,\Gamma^{mp^{r+j}}(\gl_2))$ 
is $T$-invariant.
So we get a class in $H^{2mp^{r-1}}(B,\Gamma^{mp^{r+j}}(\gl_2))$ and by Kempf vanishing (\cite[II B.3]{J} with $\lambda=0$) 
a class in $H^{2mp^{r-1}}(G,\Gamma^{mp^{r+j}}(\gl_2))$. 
Recall that one obtains a natural map from $\Gamma^{p^{r+j}}({\gl_2}\mod p)$ to the $(r+j)$-th Frobenius twist
$({\gl_2}\mod p)^{({r+j})}$
by dualizing the  map from $({\gl_2^\#}\mod p)$ to 
$S^{p^{r+j}}({\gl_2^\#}\mod p)$ that raises a vector $v\in({\gl_2^\#}\mod p)$ to its $p^{{r+j}}$-th power.
So $\Gamma^{mp^{r+j}}(\gl_2)$ maps naturally to 
$\Gamma^{m}(({\gl_2}\mod {p})^{({r+j})})$
by way of $\Gamma^{m}\Gamma^{p^{r+j}}(\gl_2)$. Applying this to our
class in $H^{2mp^{r-1}}(G,\Gamma^{mp^{r+j}}(\gl_2))$
 we hit a class in $H^{2mp^{r-1}}(G,\Gamma^{m}(({\gl_2}\mod p)^{({r+j})}))$, 
which is where $c_r[m]^{(j)}$ of \cite[4.6]{vdK coh}
lives.
On the root subgroup  ${U_\alpha}\mod p$ it is given by the cocycle 
$\Phi(X^{p^{j}},Y^{p^{j}})^{\cup mp^{r-1}}{e_\alpha^{({r+j})[m]}}\mod p$, where ${e_\alpha^{({r+j})[m]}}\mod p$
is our notation for the obvious basis vector of the lowest weight space of $\Gamma^{m}(({\gl_2}\mod p)^{({r+j})})$.
This cocycle is the same as the one used in \cite[4.6]{vdK coh} to construct $c_r[m]^{(j)}$. 
But then their cohomology classes agree on $B$ and $G$ also. 
So we have lifted the $c_r[m]^{(j)}$ of \cite[4.6]{vdK coh}
to a cohomology group with a coefficient module $\Gamma^{m}\Gamma^{p^{r+j}}(\gl_2)$ that is flat over the integers. 

\begin{Notation}
 Simply write $c_r[m]^{(j)}$ for the lift in $H^{2mp^{r-1}}(G,\Gamma^{m}\Gamma^{p^{r+j}}(\gl_2))$.
\end{Notation}

\subsection{Pairings}
In \cite[4.7]{vdK coh} we used the pairing between the modules $\Gamma^{m}({\gl_2}\mod p)^{({r})}$ and 
$S^m({\gl_2^\#}\mod p)^{(r)}$. We want to lift it to a pairing between representations $\Gamma^{m}(X_{r})$ and 
$S^m(Y_{r})$ of $G$
over $\Z$. We take $X=X_{r}=\Gamma^{p^{r}}(\gl_2)$ and define $K=\ker (X\to ({\gl_2}\mod p)^{(r)})$.

Put $Y=Y_{r}=\ker(\Hom_\Z(X,\Z)\to \Hom_\Z(K,\Z/p\Z))$. Then $Y\to \Hom_\Z((X/K),\Z/p\Z)$ is surjective because 
$X$ is a free $\Z$-module. Notice that $\Hom_\Z((X/K),\Z/p\Z)$ is just $({\gl_2^\#}\mod p)^{(r)}$. 
Thus $Y_{r}$ is flat and maps onto $({\gl_2^\#}\mod p)^{(r)}$.

We have a commutative diagram
$$\begin{array}{ccc}
\Gamma^m X\otimes S^mY & \phantom\longrightarrow\llap{$
\relbar\joinrel\relbar\joinrel\relbar\joinrel\relbar\joinrel\relbar\joinrel\relbar\joinrel\longrightarrow$} &\Z\\[.3em]
 \downarrow& & \downarrow\\[.3em]
\Gamma^m (({\gl_2}\mod p)^{(r)})\otimes S^m({\gl_2^\#}\mod p)^{(r)}) & \longrightarrow& \Z/p\Z\\
\end{array}$$
and the left vertical arrow is surjective.
So we have found our lift of the pairing from \cite[4.7]{vdK coh}.

\begin{Remark}
Notice that we do not use the precise shape of $X$ here. 
What matters is that $X$ is free over $\Z$, with
a surjection of $G$ modules $X\to ({\gl_2}\mod p)^{(r)}$,
and that, for $1\leq i\leq r$, we have an element 
in
$H^{2mp^{i-1}}(G,\Gamma^{m}X)$, suggestively denoted $c_i[m]^{(r-i)}$,
 that is mapped to the $c_i[m]^{(r-i)}$ of \cite{vdK coh} 
under the map
induced by $X\to ({\gl_2}\mod p)^{(r)}$. 
\end{Remark}

\subsection{Noetherian base ring}
From now on let $\k$ be an arbitrary commutative noetherian ring.
By base change to $\k$ we get a group scheme over $\k$ that we write again as $G=\SL_2$.
We simply write $X_{r}$ for $X_{r}\otimes_\Z \k$ and we write $Y_{r}$ for $Y_{r}\otimes_\Z \k$. 
We keep suppressing the base ring $\k$ in most notations, so that $X_{r}=\Gamma^{p^{r}}(\gl_2)$,  with
classes  $c_i[m]^{(r-i)}$ in $H^{2mp^{i-1}}(G,\Gamma^{m}X_{r})$.
The commutative diagram above becomes after base change
$$\begin{array}{ccc}
\Gamma^m X_{r}\otimes S^mY_{r} & \phantom\longrightarrow\llap{$
\relbar\joinrel\relbar\joinrel\relbar\joinrel\relbar\joinrel\relbar\joinrel\relbar\joinrel\longrightarrow$} &\k\\[.3em]
 \downarrow& & \downarrow\\[.3em]
\Gamma^m (({\gl_2}\mod p)^{(r)})\otimes S^m(({\gl_2^\#}\mod p)^{(r)})) & \longrightarrow& \k\mod p\\
\end{array}$$

\begin{Lemma}
 If $V$ is a representations of $G$ and $v\in V$, then the subrepresentation generated by $v$ exists and is finitely
generated as a $k$-module.
\end{Lemma}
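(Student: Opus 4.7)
The plan is to translate the question into comodule language and exploit coassociativity. First I would observe that $\k[G]=\k[a,b,c,d]/(ad-bc-1)$ is free as a $\k$-module: reducing monomials by means of the relation $ad=1+bc$ produces an explicit basis, for instance the monomials $a^ib^jc^k$ together with $b^jc^kd^\ell$ with $\ell\geq1$. Fix such a $\k$-basis $\{e_\beta\}$ of $\k[G]$.

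Next I would encode the rational $G$-action on $V$ as a comodule map $\Delta_V\colon V\to V\otimes_\k\k[G]$ and expand
$$\Delta_V(v)=\sum_\beta v_\beta\otimes e_\beta,$$
uniquely, with only finitely many $v_\beta\in V$ nonzero. Let $M\subseteq V$ denote the $\k$-submodule they span; it is finitely generated, and the counit axiom gives $v=\sum_\beta\epsilon(e_\beta)\,v_\beta\in M$. Writing $\Delta_{\k[G]}(e_\beta)=\sum_{\gamma,\delta}c_\beta^{\gamma\delta}\,e_\gamma\otimes e_\delta$, the coassociativity equation $(\Delta_V\otimes1)\Delta_V(v)=(1\otimes\Delta_{\k[G]})\Delta_V(v)$, combined with the uniqueness of expansion in the basis $\{e_\gamma\otimes e_\delta\}$ of $\k[G]\otimes\k[G]$, forces $\Delta_V(v_\delta)\in M\otimes\k[G]$ for every $\delta$. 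Hence $M$ is a subcomodule, i.e.\ a subrepresentation of $V$ containing $v$.

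Third I would verify that $M$ is in fact the smallest subrepresentation containing $v$, which gives both existence of the subrepresentation generated by $v$ and its finite generation over $\k$. If $N\subseteq V$ is any subrepresentation containing $v$, then $\Delta_V(v)\in N\otimes\k[G]$; freeness of $\k[G]$ lets one identify $V\otimes\k[G]=\bigoplus_\beta V\otimes e_\beta$ and $N\otimes\k[G]=\bigoplus_\beta N\otimes e_\beta$, so uniqueness of the expansion forces each $v_\beta$ into $N$, whence $M\subseteq N$. Consequently the subrepresentation generated by $v$ equals $M$ and is finitely generated as a $\k$-module (independently of the chosen basis).

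There is no serious obstacle in this argument; the one step that deserves explicit attention is the freeness of $\k[G]$ over $\k$, since it is precisely what makes the expansion unique and makes both the subcomodule property of $M$ and its minimality go through cleanly. Noetherianness of $\k$ is not even needed for the lemma, though of course it is what the rest of the paper relies on.
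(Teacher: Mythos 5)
Your proof is correct and rests on exactly the same key point as the paper's, namely the freeness of $\k[G]$ over $\k$: the paper simply cites SGA3, Expos\'e VI, Lemme 11.8, which is precisely the standard local-finiteness argument for comodules that you have written out in full (expand $\Delta_V(v)$ in a $\k$-basis of $\k[G]$, use counit and coassociativity to see the span of the coefficients is the smallest subcomodule containing $v$). So this is a self-contained version of the paper's one-line citation, not a different route.
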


\begin{proof}
 As $\k[G]$ is a free $\k$-module, this follows from \cite[Expos\'e VI, Lemme 11.8]{SGA3}.
\end{proof}

\subsection{Cup products from pairings}
Let $U$, $V$, $W$, $Z$ be $G$-modules, and $\phi:U\otimes V\to Z$ a $G$-module map. We call $\phi$ a pairing.
Computing with Hochschild complexes one gets cup products $H^i(G,U)\otimes H^j(G,V\otimes W)\to H^{i+j}(G,Z\otimes W)$ induced by $\phi$.
Note that we are not assuming that the modules are flat over $\k$. 
 We think of the Hochschild complex for computing $H^i(G,M)$ as 
$(C^*(G,\k[G])\otimes M)^G$, where $C^*(G,\k[G])$ has a differential graded algebra structure as described
in \cite[section 6.3]{TvdK}.

\subsection{Hitting invariant classes}
\begin{Definition}
 Recall that we call a homomorphism of  $\k$-algebras $f:A\to B$ \emph{noetherian}
 if $f$ makes $B$ into a noetherian 
left $A$-module. 
It is called \emph{power surjective} \cite[Definition 2.1]{FvdK}
if for every $b\in B$ there is an $n\geq1$ so that the power $b^n$ is in the image
of $f$.
\end{Definition}

See \cite[Section 6.2]{TvdK} for some relevant properties of noetherian maps in
cohomology. We are now going to look for noetherian maps. 
We keep the prime $p$ fixed.
Let $r\geq1$. Let $\bar G$ denote $G$ base changed to $(\k\mod p)$, and let $\bar G_r$ denote its $r$-th Frobenius kernel.
(Here we use that $\bar G$ is defined over $\Z/p\Z$.)
We use bars to indicate structures having $(\k\mod p)$ as base ring.
Let $\bar C$ be a finitely generated commutative $(\k\mod p)$-algebra with $\bar G$ action on which $\bar G_r$ acts trivially.
By \cite[Remark 52]{FvdK} we may view $\bar C$ also as an algebra with $G$ action.
Let $\mathcal C$ be a finitely generated commutative $\k$-algebra with $G$ action and let $\pi:\mathcal C\to\bar C$ 
be a power surjective 
equivariant homomorphism.

\begin{Theorem}\label{dominate}
 $H^\even(G,\mathcal C)\to H^0(G,H^*(\bar G_r,\bar C))$ is  noetherian.
\end{Theorem}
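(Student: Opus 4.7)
The plan is to emulate the mod $p$ argument of \cite[Section 4]{vdK coh}, systematically replacing each mod $p$ ingredient by the integral lift provided in subsections 2.2 and 2.3.

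Since $\bar G_r$ acts trivially on $\bar C$, the ring $H^*(\bar G_r,\bar C)$ is $H^*(\bar G_r,\k\mod p)\otimes \bar C$ (tensor product over $\k\mod p$), with $\bar G$ acting diagonally. Friedlander--Suslin finite generation combined with finite generation of invariants under the reductive quotient $\bar G/\bar G_r$ makes $H^0(G,H^*(\bar G_r,\bar C))$ a finitely generated $\k$-algebra. It therefore suffices to show that for each generator $\xi$ of this algebra some power $\xi^n$ lies in the image of $H^\even(G,\mathcal C)$: then the target is integral over the image, and since both are finitely generated $\k$-algebras, the target is module-finite over the image, hence noetherian as an $H^\even(G,\mathcal C)$-module.

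The mod $p$ theory of \cite[Section 4]{vdK coh} expresses a suitable power of $\xi$ --- up to correction terms of strictly smaller filtration degree handled inductively --- as the Hochschild--Serre edge-map image of a class of the form $(c_i[m]^{(r-i)}\bmod p)\cup \bar v$, where $\bar v\in (S^m Y_r\otimes \bar C)^{\bar G}$ and the cup product is formed via the mod $p$ pairing from subsection 2.3. By subsection 2.2 we have the integral lift $c_i[m]^{(r-i)}\in H^{2mp^{i-1}}(G,\Gamma^m X_r)$, and by subsection 2.5 the integral pairing yields a cup product
$$H^{2mp^{i-1}}(G,\Gamma^m X_r)\otimes H^0(G,S^m Y_r\otimes \mathcal C)\to H^{2mp^{i-1}}(G,\mathcal C).$$
Thus it suffices to exhibit, for each such $\bar v$, an integral invariant $\tilde v\in (S^m Y_r\otimes \mathcal C)^G$ whose reduction mod $p$ is a power of $\bar v$; the class $c_i[m]^{(r-i)}\cup\tilde v$ will then supply the required preimage of the appropriate power of $\xi$.

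The main obstacle is this integral lifting of $G$-invariants. Since $\pi$ is only power surjective, the map $(S^m Y_r\otimes \mathcal C)^G\to (S^m Y_r\otimes \bar C)^{\bar G}$ cannot in general be surjective, but one expects it to be power surjective. I would establish this by applying the transfer and integral-closure techniques of \cite[Section 6.2]{TvdK} and \cite{FvdK} to the finitely generated $G$-algebra $\mathcal C\oplus (S^m Y_r\otimes \mathcal C)$ (with trivial multiplication on the second summand) together with its power-surjective image in $\bar C\oplus (S^m Y_r\otimes \bar C)$; this transfers power surjectivity from the $G$-equivariant $\k$-algebras to their rings of invariants under the reductive group scheme $G$. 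Iterating over the finitely many generators $\xi$ then yields Theorem \ref{dominate}.
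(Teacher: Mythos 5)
Your overall instinct --- cup the lifted classes $c_i[m]^{(r-i)}$ against integral invariants via the pairing of $S^mY_r$ with $\Gamma^mX_r$ to produce preimages in $H^\even(G,\mathcal C)$ --- is the right one, but the way you organize the reduction has a genuine gap, concentrated in your lifting step. Power surjectivity is a notion for algebra maps, and your device for making sense of it for the module $S^mY_r\otimes\mathcal C$, namely the square-zero extension $\mathcal C\oplus(S^mY_r\otimes\mathcal C)$, is vacuous: for any $b$ in the second summand one has $b^2=0$, which certainly lies in the image, so power surjectivity of that extension says nothing about lifting $\bar v$. Even if you repair this by working in the honest algebra $S^*(Y_r)\otimes\mathcal C$ (so that a power of $\bar v$ lives in $S^{mn}Y_r\otimes\bar C$), you would then have to compare $\xi^n$ with the class obtained from $c_i[mn]^{(r-i)}$ cupped against a lift of $\bar v^{\,n}$, which requires a multiplicativity property of the classes $c_i[m]^{(j)}$ relative to the pairings that you do not establish. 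So your opening reduction (``some power of each generator is hit, hence the target is integral over the image, hence module-finite'') rests on two unproved ingredients.

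The paper avoids both by running the argument in the opposite direction: it never lifts mod $p$ invariants to characteristic zero. It takes the integral algebra $\mathcal R=\bigotimes_{a=1}^rS^*(Y_{r}(2p^{a-1}))\otimes\mathcal C$, observes that $\mathcal R\to\bar R=\bigotimes_{a=1}^rS^*((\bar{\gl}_2^{(r)})^\#(2p^{a-1}))\otimes\bar C$ is a noetherian map of $G$-algebras (because $Y_r$ surjects onto $(\bar{\gl}_2^{(r)})^\#$ and $\mathcal C\to\bar C$ is power surjective), and applies the invariant-theory result \cite[Thm.~9]{FvdK} together with Friedlander--Suslin to conclude that $H^0(G,H^*(\bar G_r,\bar C))$ is already a noetherian module over the finitely generated algebra $H^0(G,\mathcal R)$. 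After that, only a \emph{set-theoretic} factoring of $H^0(G,\mathcal R)\to H^0(G,H^*(\bar G_r,\bar C))$ through $H^\even(G,\mathcal C)$ is needed, supplied by cupping with the lifted $c_a[i_a]^{(r-a)}$ on each summand $H^0(G,\bigotimes_{a}S^{i_a}(Y_{r}(2p^{a-1}))\otimes\mathcal C)$ and invoking \cite[Remark 6.7]{TvdK}. No powers of cohomology classes, no lifting of invariants, and no integrality argument are required; I would rework your proof along these lines.
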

\begin{proof}
 By \cite[Thm 1.5, Remark 1.5.1]{FS}
$H^*(\bar G_r,\bar C)$ is a noetherian module over the finitely generated graded
algebra
$$\bar R=\bigotimes_{a=1}^rS^*((\bar{\gl}_2^{(r)})^\#(2p^{a-1}))\otimes
\bar C.$$
Here $(\bar{\gl}_2^{(r)})^\#(2p^{a-1})$ means that one places a copy of $(\bar{\gl}_2^{(r)})^\#$ in degree $2p^{a-1}$.
It is easy to see that the obvious map from 
$\mathcal R=\bigotimes_{a=1}^rS^*(Y_{r}(2p^{a-1}))\otimes
\mathcal C$ to $\bar R$ is noetherian. So 
by invariant theory \cite[Thm.~9]{FvdK},
$H^0(G,H^*(\bar G_r,\bar C))$ is a noetherian module over the finitely
generated algebra $H^0(G,\mathcal R)$. By \cite[Remark 6.7]{TvdK} it now suffices to factor the map 
$H^0(G,\mathcal R)\to H^0(G,H^*(\bar G_r,\bar C))$ as a set map through $H^\even(G,\mathcal C)\to H^0(G,H^*(\bar G_r,\bar C))$.

On a summand $$H^0(G,\bigotimes_{a=1}^rS^{i_a}(Y_{r}(2p^{a-1}))\otimes
\mathcal C)$$ of $H^0(G,\mathcal R)$ we simply take cup product with the (lifted) $c_a[i_a]^{(r-a)}$ according to the pairing of $S^{i_a}(Y_{r})$
with $\Gamma^{i_a}(X_r)=\Gamma^{i_a}\Gamma^{p^{r}}(\gl_2)$. In the proof of
\cite[Cor.~4.8]{vdK coh} one has a similar description of 
the map to $H^*(\bar G_r,\bar C)$ on the summand 
$$H^0(G,\bigotimes_{a=1}^rS^{i_a}((\bar{\gl}_2^{(r)})^\#(2p^{a-1}))\otimes
\bar C)$$ of $H^0(G,R)$. The required factoring as a set map thus follows from the compatibility of the pairings
and the fact that the lifted $c_a[i_a]^{(r-a)}$ are lifts of their mod $p$ namesakes.
\end{proof}

Recall that $G$ is the group scheme $\SL_2$ over the noetherian base ring $\k$.
Now let $A$ be a finitely generated commutative $\k$-algebra with $G$ action.
\begin{Theorem}[CFG in rank one]
 $H^*(G,A)$ is a finitely generated algebra.
\end{Theorem}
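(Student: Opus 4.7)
By \cite[Theorem 10.5]{goodfamily} it suffices to show, for every prime $p$, that $H^*(G,A/pA)$ is a noetherian $H^*(G,A)$-module. Fix $p$ and set $\bar\k=\k/p\k$, $\bar G=G\otimes_\k\bar\k$, $\bar A=A/pA$, so $H^*(G,A/pA)=H^*(\bar G,\bar A)$. Since $\bar\k$ contains $\mathbb{F}_p$, \cite[Theorem 10.1]{goodfamily} (or the argument of \cite{vdK coh} applied over $\bar\k$) gives that $H^*(\bar G,\bar A)$ is a finitely generated $\bar\k$-algebra. Noetherianness over $H^*(G,A)$ will then follow as soon as the image of $H^{\even}(G,A)\to H^*(\bar G,\bar A)$ contains a finitely generated subring over which $H^*(\bar G,\bar A)$ is noetherian.

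The plan is to produce such a subring by transplanting the mod-$p$ argument of \cite[\S 4]{vdK coh} to the integer setting, using the universal classes $c_r[m]^{(j)}$ lifted in the preceding subsections and Theorem~\ref{dominate}. Concretely, I would apply Theorem~\ref{dominate} with $\mathcal C=A$ and a suitable choice of $\bar G$-stable finitely generated $\bar\k$-algebra $\bar C$ on which $\bar G_r$ acts trivially, equipped with an equivariant power-surjective map $\pi:A\to\bar C$ (for example, a quotient or subalgebra of $\bar A$ extracted from its $p^r$-th power structure, or the $\bar G_r$-coinvariants of $\bar A$). Theorem~\ref{dominate} then yields a noetherian map $H^{\even}(G,A)\to H^0(G,H^*(\bar G_r,\bar C))$, built from cup products with the lifted $c_a[i_a]^{(r-a)}$.

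To transfer this noetherianness to $H^*(\bar G,\bar A)$, I would use the Hochschild--Serre spectral sequence for $\bar G_r\triangleleft\bar G$, whose edge terms connect $H^0(G,H^*(\bar G_r,\cdot))$ with $H^*(\bar G,\cdot)$, together with the comparison $\bar C\to\bar A$ induced by $\pi$. The crucial point is that the integer lifts of the $c_r[m]^{(j)}$ reduce mod $p$ to the mod-$p$ universal classes of \cite[\S 4]{vdK coh}, so the cup products in $H^{\even}(G,A)$ map under $H^*(G,A)\to H^*(\bar G,\bar A)$ to exactly the cup products used in the mod-$p$ CFG proof to exhibit a finitely generated subring of $H^*(\bar G,\bar A)$ over which $H^*(\bar G,\bar A)$ is noetherian. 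Combined with finite generation of $H^*(\bar G,\bar A)$ from the first paragraph, this yields the desired noetherian module structure.

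The main obstacle is precisely this transfer: choosing the right $\bar C$ together with the right equivariant power-surjection from $A$, and carrying out the Hochschild--Serre comparison so that the finitely many elements produced by Theorem~\ref{dominate} really do land on a noetherian-dominating subset of $H^*(\bar G,\bar A)$. The careful construction of the lifted classes $c_r[m]^{(j)}$ with coefficients in the flat module $\Gamma^m\Gamma^{p^{r+j}}(\gl_2)$ and of the lifted pairing with $S^mY_r$ earlier in the section is exactly what makes this transfer feasible, since after reduction mod $p$ every ingredient matches its namesake in \cite{vdK coh}.
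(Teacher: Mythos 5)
Your reduction to showing that $H^*(G,A/pA)$ is a noetherian $H^*(G,A)$-module, and your plan to get there via Theorem~\ref{dominate} and the lifted classes, match the paper's strategy. But the step you yourself flag as ``the main obstacle'' --- choosing $\bar C$ and the power-surjective equivariant map out of your $\mathcal C$ --- is exactly where the proof lives, and your proposed choices do not work. You cannot take $\mathcal C=A$: the relevant $\bar C$ must sit inside the \emph{Grosshans graded} ring $\gr\bar A$ (the paper takes $\bar C=(\gr\bar A)^{\bar G_r}$, with $r$ chosen so that $x^{p^r}\in\gr\bar A$ for all $x\in\hull_\nabla(\gr\bar A)$), and there is no equivariant algebra homomorphism from $A$ onto such a $\bar C$ --- the natural map out of $A$ lands in $\bar A$, not in $(\gr\bar A)^{\bar G_r}$, and the $p^r$-th power map is not a homomorphism of the required kind (nor are ``coinvariants'' an algebra). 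The paper's way around this is the Rees ring $\mathcal A\subseteq A[t]$ of the height filtration: $\mathcal A$ is finitely generated and surjects onto $\gr A$, the map $\gr A\to\gr\bar A$ is power surjective by \cite[Thm.~35]{FvdK}, and one takes $\mathcal C$ to be a finitely generated $G$-stable subalgebra of the preimage of $\bar C$ in $\mathcal A$ with $\mathcal C\to\bar C$ power surjective. The return trip to $A$ is the observation that $\mathcal A\to\bar A$ factors through $A$ (set $t=1$), so noetherianness of $H^\even(G,\mathcal A)\to H^*(G,\bar A)$ gives noetherianness of $H^\even(G,A)\to H^*(G,\bar A)$.

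The Grosshans graded ring is not an optional technicality here: the transfer you want to perform with the Hochschild--Serre spectral sequence for $\bar G_r\triangleleft\bar G$ (the step ``as in \cite[4.10]{vdK coh}'') only works for $\gr\bar A$, because controlling the higher rows $H^i(\bar G/\bar G_r,H^j(\bar G_r,-))$ and making the spectral sequence stop rests on the good-filtration/costandard-hull properties of $\gr\bar A$ and on the choice of $r$; these fail for $\bar A$ itself. One then needs a second spectral sequence (for the filtration of $\bar A$) to pass from $\gr\bar A$ back to $\bar A$, which is a further reason the classes must be produced in $H^\even(G,\mathcal A)$ rather than directly in $H^\even(G,A)$. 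Your outline has the right shape, and your emphasis on the lifted $c_r[m]^{(j)}$ reducing to their mod-$p$ namesakes is correct, but as written the argument omits the filtration/Rees-ring apparatus without which neither the hypotheses of Theorem~\ref{dominate} nor the spectral-sequence comparison can be satisfied.
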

\begin{proof}
 Recall that $A$ comes with an increasing 
filtration $A_{\leq0}\subseteq A_{\leq1}\cdots$ where $A_{\leq i}$ denotes the largest $G$-submodule all whose weights 
$\lambda$  satisfy $\hgt \lambda=\sum_{\beta>0}\langle \beta,\beta^\vee\rangle\leq i$.
(Actually there is now only one positive root, so that the sum has just one term.)
The associated graded algebra is the Grosshans graded ring $\gr A$. Let $\mathcal A$ be the Rees ring of the filtration.
So $\mathcal A$ is
the subring of the polynomial ring $A[t]$ generated by the subsets
$t^iA_{\leq i}$. 
Let $\bar A=A/pA$. As in \cite[Section 3]{vdK coh} we choose $r$ so big that
 $x^{p^r}\in \gr \bar A$ for all $x\in \hull_\nabla(\gr \bar A)$.
Put $\bar C=(\gr\bar A)^{\bar G_r}$. By \cite[Thm.~30]{FvdK} the algebra $\mathcal A/t\mathcal A=\gr A$ is finitely
generated,  so $\mathcal A$ is finitely generated.  By \cite[Thm.~35]{FvdK} the map $\gr A\to \gr \bar A$ is power surjective.
Then so is the map $\mathcal A\to \gr \bar A$, because $\mathcal A\to \gr A$ is surjective. 
Now take a finitely generated $G$ invariant subalgebra $\mathcal C$ of
the inverse image of $\bar C$ in
$\mathcal A$ in such a way that $\mathcal C\to \bar C$ is power surjective.
By theorem \ref{dominate} the map $H^\even(G,\mathcal C)\to H^0(G,H^*(\bar G_r,\bar C))$ is  noetherian.
By \cite[Theorem 1.5, Remark 1.5.1]{FS} the $H^*(\bar G_r,\bar C)$-module $H^*(\bar G_r,\gr\bar A)$ is noetherian
and by \cite[Theorems 9, 12]{FvdK} it follows that $H^0(G,H^*(\bar G_r,\bar C))\to H^0(G,H^*(\bar G_r,\gr\bar A))$
 is noetherian.
Then so is $H^\even(G,\mathcal C)\to H^0(G,H^*(\bar G_r,\gr\bar A))$, hence also
$H^\even(G,\mathcal A)\to H^0(G,H^*(\bar G_r,\gr\bar A))$.
This is what is needed to argue as in \cite[4.10]{vdK coh}
that $H^\even(G,\mathcal A)\to H^*(G,\gr\bar A)$ is noetherian. And then one concludes as in \cite{vdK coh}
that $H^\even(G,\mathcal A)\to H^*(G,\bar A)$ is noetherian. 
But $\mathcal A\to \bar A$ factors through $A$.
It follows that $H^\even(G, A)\to H^*(G,\bar A)$ is noetherian. 
As $p$ was an arbitrary prime, \cite[Thm.~49]{FvdK}, or rather the last item of
\cite[Theorem 10.5]{goodfamily}, applies.
\end{proof}



\begin{thebibliography}{33}









\bibitem{FS} Eric Friedlander and Andrei Suslin, {\em Cohomology of finite group
scheme over a field}, Invent. Math {127} (1997) 209--270.

\bibitem{FvdK} Vincent Franjou  and Wilberd van der Kallen,
    \emph{Power reductivity over an arbitrary base},
    Documenta Mathematica, Extra Volume Suslin (2010), pp. 171-195. 





%

\bibitem{J} Jens Carsten Jantzen,
\emph{Representations of algebraic groups}.
Second edition. Mathematical Surveys and Monographs, 107. American Mathematical Society, Providence, RI, 2003.

\bibitem{Touze}A.~Touz\'e, {\it Universal classes for algebraic groups},
Duke Mathematical Journal 151 (2010), 219--249.
 

\bibitem{vdK coh} Wilberd van der Kallen, \emph{Cohomology with Grosshans graded
coefficients}, In: Invariant Theory in All Characteristics, Edited by: 
H. E. A. Eddy Campbell and David L. Wehlau,
 CRM Proceedings and Lecture Notes, Volume 35 (2004) 127-138,
 Amer. Math. Soc., Providence, RI, 2004.









\bibitem[SGA3]{SGA3} M.\,Demazure, A.\,Grothendieck, {\em Sch\'emas en groupes}
I, II, III, Lecture Notes in Math {\bf 151, 152, 153}, Springer-Verlag, New York (1970)
and new edition in Documents Math\'ematiques {\bf 7, 8}  of Soci\'et\'e Math\'ematique de France (2011).  



\bibitem{TvdK} Antoine Touz\'e and Wilberd van der Kallen,
\emph{
Bifunctor cohomology and Cohomological finite generation for reductive groups}
,
Duke Mathematical Journal 151 (2010), 251--278. 

\bibitem{goodfamily}Wilberd van der Kallen, 
    \emph{Good Grosshans filtration in a family}, arXiv:1109.5822v3

\end{thebibliography}
\end{document}